\LetLtxMacro\todonotestodo\todo
\renewcommand{\todo}[2][]{\todonotestodo[#1]{TODO: {#2}}}
\theoremstyle{definition}
\newtheorem{lemma}{Lemma}[section]
\theoremstyle{definition}
\newtheorem{theorem}[lemma]{Theorem}
\newtheorem{remark}[lemma]{Remark}
\newtheorem*{claim*}{Claim}
\newtheorem{question}[lemma]{Question}
\newtheorem*{theorem*}{Theorem}
\newtheorem*{corollary*}{Corollary}
\newtheorem*{lemma*}{Lemma}
\newtheorem*{remark*}{Remark}
\newtheorem*{question*}{Question}
\newcommand{\Q}{\mathbb{Q}}
\newcommand{\Z}{\mathbb{Z}}
\newcommand{\N}{\mathbb{N}}
\newcommand{\F}{\mathbb{F}}
\DeclareMathOperator{\Int}{Int}
\title[Irreducibility properties of Carlitz' binomial coefficients]{Irreducibility properties of Carlitz' binomial coefficients for algebraic function fields}
\author{Robert Tichy}
\author{Daniel Windisch}
\thanks{\textit{Mathematics subject classification.} primary 11T55; secondary 13F20, 11R58}
\keywords{finite fields, function fields, integer-valued polynomials, Carlitz polynomials, irreducibility, absolute irreducibility}
\thanks{D.~Windisch is supported by the Austrian Science Fund (FWF): I~4406}
\begin{document}

\begin{abstract}
We study the class of univariate polynomials $\beta_k(X)$, introduced by Carlitz, with coefficients in the algebraic function field $\F_q(t)$ over the finite field $\F_q$ with $q$ elements. It is implicit in the work of Carlitz that these polynomials form a $\F_q[t]$-module basis of the ring $\Int(\F_q[t]) = \{f \in \F_q(t)[X] \mid f(\F_q[t]) \subseteq \F_q[t]\}$ of integer-valued polynomials on the polynomial ring $\F_q[t]$. This stands in close analogy to the famous fact that a $\Z$-module basis of the ring $\Int(\Z)$ is given by the binomial polynomials $\binom{X}{k}$.

We prove, for $k = q^s$, where $s$ is a non-negative integer, that $\beta_k$ is irreducible in $\Int(\F_q[t])$ and that it is even absolutely irreducible, that is, all of its powers $\beta_k^m$ with $m>0$ factor uniquely as products of irreducible elements of this ring. As we show, this result is optimal in the sense that $\beta_k$ is not even irreducible if $k$ is not a power of $q$.
\end{abstract}

\maketitle

\section{Introduction}

The well-known binomial polynomials 
\[
\binom{X}{k} = \frac{X(X-1)\cdots(X-k+1)}{k!}.
\]
have the remarkable property that they (interpreted as functions) map integers to integers. In other words, they are elements of the ring
\[\Int(\Z) = \{f \in \Q[X] \mid f(\Z) \subseteq \Z\}\]
of integer-valued polynomials (sometimes called \textit{numerical polynomials} in the context of Hilbert functions of algebraic varieties).

It is documented that already Isaac Newton used these polynomials for interpolation of functions $\Z \to \Z$; see for instance~\cite[page xiii]{Cahen-Chabert}. In the core of this interpolation method lies the fact that the binomial polynomials form a $\Z$-module basis of $\Int(\Z)$.

In 1940, Carlitz~\cite{Carlitz} introduced an analogous class of polynomials in the case of algebraic function fields. For a prime power $q = p^n$ and two independent variables $t$ and $X$ over the finite field $\F_q$, he defined
\[ \psi_0(X) = X, \hspace{0.3cm} \psi_m(X) = \prod_{\substack{{f \in \F_q[t]} \\ {\deg(f) < m}}} (X - f) \]
and
\[ F_0 = 1, \hspace{0.3cm} F_m = (t^{q^m} - t) (t^{q^{m-1}} - t)^q \cdots (t^q - t)^{q^{m-1}}, \]
for $m\geq 1$, where $\deg(0) = - \infty$ as usual. For a non-negative integer $k$ given in $q$-ary digit representation
\[ k = \alpha_0 + \alpha_1 q + \ldots + \alpha_s q^s, \]
for some $s$ and digits $\alpha_i \in \{0, \ldots, q-1\}$, Carlitz introduced
\[ G_k(X) = \psi_0^{\alpha_0}(X) \cdots \psi_s^{\alpha_s}(X) \text{ and } \]
\[ g_k(X) = F_0^{\alpha_0} \cdots F_s^{\alpha_s}. \]
Based on this, we write
\[ \beta_k(X) = \frac{G_k(X)}{g_k} \]
and call this polynomial in the variable $X$ with coefficients in the algebraic function field $\F_q(t)$ the $k$-th \textit{Carlitz binomial polynomial}.

Let us shortly illustrate the analogy of the Carlitz binomial polynomials to the classical binomial polynomials. 
As noted above, the binomial polynomials $b_k = \binom{X}{k} = \frac{X(X-1)\cdots (X-k+1)}{k!}$ form a regular basis of $\Int(\Z)$, that is, they are a $\Z$-module basis of $\Int(\Z)$ and $\deg(b_k) = k$ for each $k \in \N_0$. It is implicit in the work of Carlitz~\cite{Carlitz} that the analogous statement is true for~$\beta_k$.

\begin{remark} \phantom{} 
\begin{enumerate}
\item  $\beta_k \in \Int(\F_q[t]) = \{f \in \F_q(t)[X] \mid f(\F_q[t]) \subseteq \F_q[t]\}$.
\item  $\beta_k$ has degree $k$.
\item $(\beta_k)_{k \in \N_0}$ is a regular basis of $\Int(\F_q[t])$.
\end{enumerate}
\end{remark}

\begin{proof} \phantom{}
\begin{enumerate}
\item This is~\cite[Lemma 2]{Carlitz}.
\item This follows from the paragraph after (1.8) in~\cite{Carlitz}.
\item Each polynomial $f \in \F_q(t)[X]$ has a unique representation $f(X) = \sum_{k = 0}^n A_k G_k(X)$ with $A_k \in \F_q(t)$~\cite[(3.1)]{Carlitz}. Such an $f$ is in $\Int(\F_q[t])$ if and only if $A_kg_k \in \F_q[t]$~\cite[Theorem 9]{Carlitz}. So in this case, by setting $B_k = A_kg_k$ we infer a unique representation $f = \sum_{k = 0}^n B_k \beta_k$, where $B_k \in \F_q[t]$.
\end{enumerate}

\end{proof}

Since the time of Carlitz, the theory of integer-valued polynomials has developed and shown interesting connections to other fields, especially to number theory. For instance, it played a certain role in the work of Barghava on $P$-orderings, see his 2009 paper~\cite{Bhargava} and the survey~\cite{cahen-bhargava}. In the spirit of building bridges between structural results on integer-valued polynomials and other relevant fields, Rissner and the second author~\cite{binomial} employed results from diophantine number theory in order to show that the binomial polynomials, which had been known to be irreducible in $\Int(\Z)$, are in fact \textit{absolutely irreducible}, that is, all of their powers factor uniquely as products of irreducible elements. Apart from that, multiplicative properties of integer-valued polynomials have widely been studied over the last few decades, see for instance~\cite{elasticitycahen,elasticitychapman,FFW,FW,integers,graph,
globalcase,splitDVR}. In particular, we want to mention the work of Sophie Frisch~\cite{integers}, where she showed that for every finite multi-set of integers $\geq 2$ there exists an integer-valued polynomial over $\Z$ whose set of lengths of factorizations into irreducible elements of $\Int(\Z)$ coincides with the given multi-set. In~\cite{globalcase}, this result was extended to a wide class of Dedekind domains, including $\F_q[t]$.

In the following, we mention some work where Carlitz polynomials were applied in different fields. For instance, in~\cite{ergodic} they were used for studying ergodic theory in the power series ring over $\F_2$. In~\cite{1-Lipschitz}, measure preserving $1$-Lipschitz functions on the power series ring of a finite field are characterized by the basis of Carlitz binomial polynomials. Wagner~\cite{Wagner}, a student of Carlitz, explicitly uses the Carlitz binomial polynomials to characterize linear operators. Thakur~\cite{ThakurHypergeometric} developes a whole theory of hypergeometric functions on function fields using properties of Carlitz polynomials. Armana~\cite{Armana}, uses Carlitz binomials coefficients to describe certain values of Zeta functions;
Perkins~\cite{Perkins} studies $L$-series and uses an interpolation formula involving $\beta_k$.
Wagner~\cite{WagnerFact} already studied factorization properties of the Carlitz polynomials. More precisely, he investigated the irreducible factors of certain values of these polynomials.

The purpose of the present paper is to characterize absolute irreducibility of Carlitz binomial polynomials. 

\begin{theorem}\label{theorem:main}
Let $q$ be a prime power and $k$ a positive integer.
\begin{enumerate}

\item The Carlitz binomial polynomial $\beta_k$ is irreducible in $\Int(\F_q[t])$ if and only if $k = q^s$. More precisely, there is a decomposition $\beta_k = \prod_{i = 1}^s (\beta_{q^i})^{\alpha_i}$, where $k = \alpha_0 + \alpha_1 q + \ldots + \alpha_s q^s$ with $\alpha_i \in \{0,\ldots,q-1\}$, $\alpha_s \neq 0$ and $s$ a non-negative integer.

\item The $m$-th power $\beta_{q^s}^m$ of the irreducible Carlitz binomial polynomial $\beta_{q^s}$ factors uniquely as a product of irreducible elements of $\Int(\F_q[t])$ for every positive integer $m$. In other words, $\beta_{q^s} \in \Int(\F_q[t])$ is absolutely irreducible.
\end{enumerate}
\end{theorem}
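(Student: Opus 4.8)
The plan is to reduce factorization questions in $\Int(\F_q[t])$ to a polynomial factorization in $\F_q[t][X]$ together with a divisibility condition on fixed divisors, and then to exploit the fact that the root set of $\psi_s$ is a \emph{complete} residue system modulo a prime of degree $s$. First I would set up the content framework: every nonzero $f\in\Int(\F_q[t])$ can be written as $f=\tfrac1b F$ with $F\in\F_q[t][X]$ primitive (content $1$) and $b\in\F_q[t]$, and since $\F_q[t][X]$ satisfies Gauss's lemma the primitive parts and denominators multiply in a product. Writing $\mathfrak{d}(F)=\gcd\{F(a):a\in\F_q[t]\}$ for the fixed divisor, one has $\tfrac1bF\in\Int(\F_q[t])$ iff $b\mid\mathfrak{d}(F)$, and $\mathfrak{d}(G)\mathfrak{d}(H)\mid\mathfrak{d}(GH)$ always. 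Applied to $\beta_{q^s}=\psi_s/F_s$, where $\psi_s=\prod_{\deg f<s}(X-f)$ is monic hence primitive, this says: once I know $\mathfrak{d}(\psi_s)=F_s$, any factorization $\beta_{q^s}=gh$ forces a factorization $\psi_s=GH$ in $\F_q[t][X]$ with $\mathfrak{d}(G)\mathfrak{d}(H)=F_s$, and analogously for powers $\beta_{q^s}^n=\psi_s^n/F_s^n$.

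The key local computation is as follows. For a prime $P$ and an exponent vector $\mathbf{e}=(e_f)$ one has $v_P\big(\mathfrak{d}(\prod_f(X-f)^{e_f})\big)=\min_a\sum_f e_f\,v_P(a-f)=:D_P(\mathbf{e})$. By examining how $V_s:=\{f:\deg f<s\}$ distributes over the residues modulo $P^r$, I would prove $D_P(\mathbf{1})=v_P(F_s)$ for all $P$, which gives $\mathfrak{d}(\psi_s)=F_s$ and, by scaling, $\mathfrak{d}(\psi_s^n)=F_s^n$. The crucial case is $\deg P=s$: then $V_s\to\F_q[t]/P$ is a \emph{bijection} and $v_P(F_s)=1$, so every $a$ is congruent to exactly one $f\in V_s$ and therefore $D_P(\mathbf{e})=\min_f e_f$. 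Such primes $P$ exist for every $s\ge1$.

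For part (1), the multiplicativity of $G_k$ and $g_k$ in the $q$-ary digits gives the asserted factorization $\beta_k=\prod_i\beta_{q^i}^{\alpha_i}$ directly from the definitions; whenever $k$ is not a power of $q$ this exhibits $\beta_k$ as a product of at least two non-units, so $\beta_k$ is reducible. To prove $\beta_{q^s}$ irreducible (for $s\ge1$; the case $s=0$, i.e.\ $\beta_1=X$, is immediate), suppose $\beta_{q^s}=gh$ nontrivially. By the framework this yields a partition $V_s=S\sqcup T$ into nonempty sets with $\mathfrak{d}\big(\prod_{f\in S}(X-f)\big)\cdot\mathfrak{d}\big(\prod_{f\in T}(X-f)\big)=F_s$. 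Evaluating at a prime $P$ of degree $s$ and using $D_P(\mathbf{e})=\min_f e_f$ gives $D_P(\mathbf{1}_S)+D_P(\mathbf{1}_T)=0+0=0<1=v_P(F_s)$ (each minimum vanishes because $S$ and $T$ are proper subsets), a contradiction.

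For part (2), the main statement, I would classify all divisors of $\beta_{q^s}^n$. Any such divisor corresponds to an exponent vector $\mathbf{e}$ with $0\le e_f\le n$ satisfying $\mathfrak{d}\big(\prod_f(X-f)^{e_f}\big)\cdot\mathfrak{d}\big(\prod_f(X-f)^{n-e_f}\big)=F_s^n$, i.e.\ $D_P(\mathbf{e})+D_P(n\mathbf{1}-\mathbf{e})=n\,v_P(F_s)$ for all $P$. A single prime $P$ of degree $s$ already forces $\min_f e_f+(n-\max_f e_f)=n$, hence $\min_f e_f=\max_f e_f$, so all $e_f$ are equal and the divisor is $\beta_{q^s}^{e}$. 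Thus the only divisors of $\beta_{q^s}^n$ are $\beta_{q^s}^0,\dots,\beta_{q^s}^n$; the only irreducible among them is $\beta_{q^s}$, and every decomposition of $\beta_{q^s}^n$ into irreducibles equals $\beta_{q^s}^n$ up to order and units, which is precisely absolute irreducibility. I expect the main obstacle to be the foundational reduction (the content/fixed-divisor theory of $\Int(\F_q[t])$, where finiteness of the residue fields $\F_q[t]/P$ is essential) together with the computation $\mathfrak{d}(\psi_s)=F_s$; once these are secured, the decisive and perhaps surprising point is that one prime of degree exactly $s$ simultaneously delivers both irreducibility and absolute irreducibility, reflecting that $V_s$ is an indivisible complete residue system — a structural luxury unavailable in the classical $\Z$-setting.
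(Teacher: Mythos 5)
Your proposal is correct, and it takes a genuinely different route from the paper. For part~(1), the paper argues via leading coefficients: since $(\beta_k)_k$ is a regular basis of $\Int(\F_q[t])$, a factorization $\beta_{q^s}=GH$ with $\deg G=c$, $\deg H=d$ forces $\frac{g_cg_d}{g_{q^s}}\in\F_q[t]$, and this is refuted by a combinatorial inequality on $q$-ary digits (the paper's Lemma~2.1 and Remark~2.2). For part~(2), the paper imports the Rissner--Windisch valuation-matrix machinery: it evaluates the linear factors $\frac{X-f_j}{t^s-f_j}$ at the points $t^s+f_i$, takes $t$-adic valuations, and proves the resulting $(q^s-1)\times q^s$ matrix has full rank through a block-diagonal analysis and a determinant computation (Lemma~2.3). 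You replace both steps by a single local computation in the content/fixed-divisor framework: because $V_s$ is a complete residue system modulo any prime $P$ of degree $s$ and $v_P(F_s)=1$, the fixed divisor of $\prod_f(X-f)^{e_f}$ has $P$-adic valuation exactly $\min_f e_f$; this simultaneously rules out nontrivial factorizations of $\beta_{q^s}$ and shows that every divisor of $\beta_{q^s}^n$ in $\Int(\F_q[t])$ is an associate of some $\beta_{q^s}^e$ --- a chain structure on the divisors that is strictly stronger than unique factorization of the powers. What each approach buys: yours is self-contained (no rank criterion from the cited paper, neither of the two combinatorial lemmas) and isolates the structural reason the function-field case is easier than $\Int(\Z)$, namely that one prime of degree $s$ suffices where over $\Z$ many primes and valuations must cooperate; the paper's route, in turn, demonstrates that the general matrix method transfers from $\Z$ to $\F_q[t]$ and exhibits the digit combinatorics of the Carlitz factorials $g_k$, which have independent interest. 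Two small points in your write-up need patching, though neither is a gap: not every element of $\Int(\F_q[t])$ has the form $\frac1bF$ with $F$ primitive (e.g.\ $f=t$), so divisors should be written as $\frac{a}{b}F$ with $\gcd(a,b)=1$, after which the relation $bb'=F_s^n aa'$ (from Gauss's lemma) together with $b\mid\mathfrak{d}(G)$, $b'\mid\mathfrak{d}(H)$ and $\mathfrak{d}(G)\mathfrak{d}(H)\mid\mathfrak{d}(\psi_s^n)=F_s^n$ forces $a,a'$ to be units; and the nonemptiness of $S$ and $T$ in part~(1) requires excluding a non-unit constant factor $g\in\F_q[t]\setminus\F_q^\times$, which the same computation does, since $\psi_s/(gF_s)\in\Int(\F_q[t])$ would need $gF_s\mid\mathfrak{d}(\psi_s)=F_s$.
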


\section{Number theoretic and combinatorial preparations for the theorem}

\begin{lemma}\label{lemma:inequality}
Let $n = q^s$ where $q\geq 2$ and $s\geq 0$ are integers. Moreover, let $k \in \{0,\ldots,n\}$ and write 
\begin{align*}
k &= \gamma_0 + \gamma_1 q + \ldots + \gamma_s q^s, \\
n-k &= \delta_0 + \delta_1 q + \ldots + \delta_s q^s
\end{align*}
with $\gamma_i, \delta_i \in \{0,\ldots,q-1\}$.
Then
\[
\sum_{i = 0}^s (\gamma_i + \delta_i)  iq^i \leq sq^s
\]
and this inequality is strict unless $k = 0$ or $k = n$.
\end{lemma}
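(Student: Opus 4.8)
The plan is to reduce the claimed inequality to a statement about termwise nonnegativity, exploiting the fact that $n = q^s$ is itself a power of $q$. The key observation I would record first is that the $q$-ary digits of $k$ and of $n-k$ encode $k$ and $n-k$ respectively, so that
\[
\sum_{i=0}^s (\gamma_i + \delta_i) q^i = \sum_{i=0}^s \gamma_i q^i + \sum_{i=0}^s \delta_i q^i = k + (n - k) = n = q^s .
\]
This identity is what makes the right-hand side $sq^s$ tractable: I can rewrite it as $s q^s = s \sum_{i=0}^s (\gamma_i + \delta_i) q^i$.

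Next I would move everything to one side. Subtracting the left-hand side of the asserted inequality from $s\sum_{i=0}^s (\gamma_i+\delta_i)q^i$ and collecting the coefficient of each $q^i$, the inequality $\sum_{i=0}^s (\gamma_i + \delta_i) i q^i \le s q^s$ becomes equivalent to
\[
\sum_{i=0}^s (\gamma_i + \delta_i)(s - i) q^i \ge 0 .
\]
Since $i$ runs through $\{0, \ldots, s\}$, each factor $s - i$ is nonnegative, while $\gamma_i + \delta_i \ge 0$ and $q^i > 0$; hence every summand is nonnegative and so is the sum. This proves the inequality, and notably it uses no relation between the digits of $k$ and those of $n-k$ beyond the single identity above.

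For the strictness I would analyze the equality case. Equality holds precisely when the nonnegative sum above vanishes, which forces each term to vanish, i.e. $(\gamma_i + \delta_i)(s-i) q^i = 0$ for all $i$. For $i < s$ the factor $s-i$ is strictly positive and $q^i > 0$, so this forces $\gamma_i = \delta_i = 0$ for every $i < s$. Consequently $k = \gamma_s q^s$ and $n - k = \delta_s q^s$ are both multiples of $q^s$; as $0 \le k \le n = q^s$, the only possibilities are $k = 0$ or $k = q^s = n$. Conversely, in those two boundary cases the only surviving digit sits at position $i = s$, where the factor $s-i = 0$ kills its contribution, so equality indeed holds. This yields exactly the claimed dichotomy.

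I do not anticipate a genuine obstacle here: the entire content is the reformulation via the identity $\sum_{i=0}^s(\gamma_i+\delta_i)q^i = q^s$, after which both the inequality and the equality analysis are termwise and elementary. The only point worth a sanity check is the degenerate case $s = 0$, where $n = 1$ and $k \in \{0,1\}$ are both boundary values, consistent with the statement that strictness can fail only at $k = 0$ and $k = n$.
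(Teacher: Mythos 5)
Your proof is correct, and it takes a genuinely different --- and substantially more elementary --- route than the paper's. The paper proves the lemma by induction on $s$, splitting into three cases according to whether $\gamma_{s-1}+\delta_{s-1}$ is $\geq q$, $\leq q-2$, or $=q-1$ (the middle case being shown impossible, and the last invoking the induction hypothesis for $n' = q^{s-1}$). You instead observe that the single numerical identity
\[
\sum_{i=0}^s (\gamma_i+\delta_i) q^i = k + (n-k) = q^s
\]
lets one rewrite the difference of the two sides of the asserted inequality as
\[
sq^s - \sum_{i=0}^s (\gamma_i+\delta_i)\, i\, q^i = \sum_{i=0}^s (\gamma_i+\delta_i)(s-i) q^i ,
\]
which is termwise nonnegative; its vanishing forces $\gamma_i = \delta_i = 0$ for all $i<s$, hence $k \in \{0, q^s\}$, and conversely both boundary cases give equality since the only surviving digit sits at position $s$. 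Note that your identity is purely numerical (the sum of the two expansions as numbers equals $n$), so no control of carries is needed anywhere. This avoids induction and all case analysis, and it even proves something slightly more general: for an arbitrary $n$ whose digits (and those of $k$ and $n-k$) occupy positions $0,\ldots,s$, the same computation gives $\sum_i (\gamma_i+\delta_i)\, i\, q^i \leq s\,n$, with equality exactly when all digits below position $s$ vanish; the lemma is the specialization $n = q^s$, where that condition collapses to $k=0$ or $k=n$. The paper's induction yields as a by-product some information about carries (namely that $\gamma_{s-1}+\delta_{s-1} \in \{q-1,\, q\}$ whenever $0<k<n$), but none of this is needed for the statement, so your argument is a clean simplification.
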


\begin{proof}
Clearly, in the cases $k = 0$ and $k = n$ equality holds, so the statement is trivial. For the remainder of the proof, we assume $0<k<n$. Thus, $\gamma_s = \delta_s = 0$ and we show the strict inequality by induction on $s$. The case $s = 1$ is trivial because the left side of the inequality is $0$ while the right side is $q>0$. So let $s>1$. We distinguish three cases.

\textit{Case 1}. $\gamma_{s-1} + \delta_{s-1} \geq q$. Since
\[
q^s = n = k + (n-k) = \sum_{i = 0}^{s-1} (\gamma_i + \delta_i)  q^i > q^s
\]
if $\gamma_{s-1} + \delta_{s-1} > q$, we infer that $\gamma_{s-1} + \delta_{s-1} = q$ and $\gamma_i + \delta_i = 0 $ for $i < s-1$. From this, it follows that
\[
\sum_{i = 0}^s (\gamma_i + \delta_i)  iq^i = q(s-1)q^{s-1} = (s-1)q^s < sq^s.
\]

\textit{Case 2}. $\gamma_{s-1} + \delta_{s-1} = q-m \leq q-2$, that is, $m\geq 2$. Using $\gamma_i, \delta_i \leq q-1$, we deduce as above that
\begin{align*}
 q^s = n = \sum_{i = 0}^{s-1} (\gamma_i+ \delta_i)q^i &= \sum_{i = 0}^{s-2} (\gamma_i+ \delta_i)q^i + (q-m)q^{s-1}.
\end{align*}
Furthermore, we have
\begin{align*}
 \sum_{i = 0}^{s-2} (\gamma_i+ \delta_i)q^i &\leq 2(q-1) \sum_{i =0}^{s-2} q^i = 2(q^{s-1} -1),
\end{align*}
and thus, we obtain
\[
q^s \leq 2q^{s-1} - 2 + q^s - mq^{s-1} = (2-m)q^{s-1}-2 + q^s < q^s.
\]
This is a contradiction, so the assumption of \textit{Case 2} is impossible.

\textit{Case 3}. $\gamma_{s-1} + \delta_{s-1} = q-1$. As before, we see that
\[
q^s = n = \sum_{i = 0}^{s-2} (\gamma_i+ \delta_i)q^i + (q-1)q^{s-1}
\]
and hence $\sum_{i = 0}^{s-2} (\gamma_i+ \delta_i)q^i = q^{s-1}$. This enables us to apply the induction hypothesis to $n' = q^{s-1}$, $k' = \sum_{i = 0}^{s-2} \gamma_i q^i$ and $n' - k' = \sum_{i = 0}^{s-2} \delta_i q^i$. This leads to
\begin{align*}
\sum_{i = 0}^s (\gamma_i + \delta_i)  iq^i  &= \sum_{i = 0}^{s-2} (\gamma_i + \delta_i)  iq^i + (q-1)(s-1)q^{s-1} \\
	&< (s-1)q^{s-1} + (s-1)(q-1)q^{s-1} = (s-1)q^s < sq^s
\end{align*}
which completes the proof.

\end{proof}

Following the analogy of $\beta_k$ with the binomial polynomials $\binom{X}{k}$, we may consider $g_k$ as an $\F_q[t]$-analogue of $k!$. This leads us to the definition of a binomial coefficient in $\F_q[t]$ as $\binom{n}{k}_{\F_q[t]} = \frac{g_n}{g_k g_{n-k}}$.

\begin{remark}\label{remark:binom}
Let $n$ and $k$ be non-negative integers and write
\begin{align*}
n &= \alpha_0 + \alpha_1 q + \ldots + \alpha_s q^s, \\
k &= \gamma_0 + \gamma_1 q + \ldots + \gamma_s q^s, \\
n - k &= \delta_0 + \delta_1 q + \ldots + \delta_s q^s,
\end{align*}
where $\alpha_i,\gamma_i,\delta_i \in \{0,\ldots,q-1\}$ and $\alpha_s \neq 0$.

Then the following are equivalent:
\begin{enumerate}
\item[(a)] $\binom{n}{k}_{F_q[t]} = 1$
\item[(b)] $\binom{n}{k}_{F_q[t]} \in \F_q^\times$

\end{enumerate}
Moreover, the following statement (c) is sufficient for (a) and (b), and all three are equivalent in case $n = q^s$:
\begin{itemize}
\item[(c)] $\alpha_i = \gamma_i + \delta_i$ for all $i \in \{0,\ldots,s\}$
\end{itemize}
\end{remark}

\begin{proof}
The equivalence of (a) and (b) is obvious since all factors of the $g_k$ are monic. The implication from (c) to (a) is clear. For the converse in case $n = q^s$, note that by Lemma~\ref{lemma:inequality} $\deg_t(g_n) = \sum_{i = 0}^s \alpha_i \cdot i q^i = sq^s \geq \sum_{i = 0}^s \gamma_i \cdot i q^i + \sum_{i = 0}^s \delta_i \cdot i q^i = \deg_t(g_k \cdot g_{n-k})$, with a strict inequality if (c) does not hold.
\end{proof}

We recursively define a family of integer matrices that is needed in the proof of our main result. Let $q$ and $k$ be positive integers. For the moment, $q$ does not have to be a prime power. Let $M_1^{(k)}$ be the $(q\times q)$-matrix that has the entry $k$ along its diagonal and $k-1$ as off-diagonal entries. Thus, $M_1^{(k)}$ is the matrix
\[
\begin{bmatrix} 
      k & k-1 & k-1 &\dots & k-1 \\
    k-1 & k & k-1 & \dots & k-1  \\
    k-1 & k-1 & k &  & \\
    \vdots & \vdots &  & \ddots &  \\
    k-1 & k-1 & &  & k  \\
\end{bmatrix}.
\]
Now, for $i \in \{2,\ldots,k\}$, we define the $(q^i \times q^i)$-matrix $M_i^{(k)}$ recursively as follows: It has $q$ times the block $M_{i-1}^{(k)}$ along its diagonal and the entries outside this block diagonal are $k-i$. For notational simplicity, we set $M_k = M_k^{(k)}$. Note that the entries of $M_k$ are exactly the integers $0,1,\ldots,k$.

\begin{lemma}\label{lemma:matrix}
The matrix $M_k$ has non-zero determinant.
\end{lemma}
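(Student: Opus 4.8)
The plan is to exploit the recursive block structure of $M_k$ by rewriting it through Kronecker products and then computing its determinant modulo $q$, where it will turn out to be congruent to $1$. Writing $J_m$ for the $m\times m$ all-ones matrix and $I_m$ for the identity, the recursive definition of $M_i^{(k)}$ translates into
\[
M_i^{(k)} = I_q \otimes M_{i-1}^{(k)} + (k-i)\,(J_q - I_q)\otimes J_{q^{i-1}},
\]
since $I_q \otimes M_{i-1}^{(k)}$ produces the $q$ diagonal blocks $M_{i-1}^{(k)}$ while $(J_q - I_q)\otimes J_{q^{i-1}}$ fills the off-diagonal blocks with the constant $k-i$. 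The matrix on the right is of the form $I_q \otimes (D-E) + J_q \otimes E$ with $D = M_{i-1}^{(k)}$ and $E = (k-i)J_{q^{i-1}}$, and for any such block matrix one has the determinant identity
\[
\det\!\big(I_q \otimes (D-E) + J_q \otimes E\big) = \det\!\big(D + (q-1)E\big)\cdot \det(D-E)^{\,q-1}.
\]
This identity I would justify by conjugating the first Kronecker factor with a matrix $U$ diagonalizing the symmetric matrix $J_q$ (whose eigenvalues are $q$ once and $0$ with multiplicity $q-1$); this block-diagonalizes the whole matrix into the blocks $D+(q-1)E$ and $D-E$, and notably requires no commutation hypothesis on $D$ and $E$.

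The difficulty is that the off-diagonal blocks carry the matrix $J_{q^{i-1}}$, so a single application of the formula expresses $\det M_i^{(k)}$ in terms of determinants of $M_{i-1}^{(k)}$ perturbed by multiples of $J_{q^{i-1}}$; the recursion does not close on the bare determinants. To remedy this I would introduce the one-parameter family
\[
D_i(c) := \det\!\big(M_i^{(k)} + c\,J_{q^i}\big), \qquad c \in \Z.
\]
Using $J_{q^i} = J_q \otimes J_{q^{i-1}}$, the matrix $M_i^{(k)} + cJ_{q^i}$ again has the block form above, now with $D = M_{i-1}^{(k)} + cJ_{q^{i-1}}$ and $E = (k-i+c)J_{q^{i-1}}$, so that $D - E = M_{i-1}^{(k)} - (k-i)J_{q^{i-1}}$ and $D+(q-1)E = M_{i-1}^{(k)} + \big(qc+(q-1)(k-i)\big)J_{q^{i-1}}$. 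The determinant identity then yields the self-contained recursion
\[
D_i(c) = D_{i-1}\!\big(qc + (q-1)(k-i)\big)\cdot D_{i-1}\!\big(-(k-i)\big)^{\,q-1},
\]
with base case $D_1(c) = \det\!\big((k-1+c)J_q + I_q\big) = q(k-1+c)+1$, and $\det M_k = D_k(0)$.

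The punchline is a reduction modulo $q$. The base case satisfies $D_1(c) = q(k-1+c)+1 \equiv 1 \pmod q$ for every integer $c$, and the recursion expresses $D_i(c)$ as a product of values $D_{i-1}(\cdot)$; hence by induction on $i$ every $D_i(c)$ is a product of integers each congruent to $1$, so $D_i(c) \equiv 1 \pmod q$. In particular $\det M_k = D_k(0) \equiv 1 \pmod q$, which is nonzero as soon as $q \geq 2$. (The degenerate case $q = 1$ is trivial, since then $M_k$ is the $1\times 1$ matrix $(k)$.) I expect the main obstacle to be precisely the bookkeeping that forces the introduction of the parameter $c$: recognizing that the determinant must be tracked along the whole affine family $M_i^{(k)} + cJ_{q^i}$ rather than for $c = 0$ alone is what makes the induction go through, while the congruence $\equiv 1 \pmod q$ is what ultimately guarantees non-vanishing.
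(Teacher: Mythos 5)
Your proof is correct, and it takes a genuinely different route from the paper. The paper proceeds by Gaussian elimination: it first reduces to the single ``worst-case'' matrix with $k$ on the diagonal and $k-1$ in every off-diagonal position (on the grounds that all off-diagonal entries of $M_k$ are $\leq k-1$), and then shows via an explicit recursion $r_{n+1} = r_n\bigl(1-\tfrac{k-1}{k}\bigr)+\tfrac{(k-1)^2}{k}$ that all pivots $k-r_n$ stay positive, so the determinant is non-zero. You instead exploit the recursive block structure exactly: the decomposition $M_i^{(k)} = I_q\otimes M_{i-1}^{(k)} + (k-i)(J_q-I_q)\otimes J_{q^{i-1}}$ is right, and your determinant identity $\det\bigl(I_q\otimes(D-E)+J_q\otimes E\bigr)=\det\bigl(D+(q-1)E\bigr)\det(D-E)^{q-1}$ is valid with no commutativity hypothesis, since conjugating by $P\otimes I$ (with $P^{-1}J_qP$ diagonal, which exists even over $\Q$ as $J_q$ has minimal polynomial $x(x-q)$) block-diagonalizes the matrix by the mixed-product rule. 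The key extra idea --- passing to the affine family $D_i(c)=\det\bigl(M_i^{(k)}+cJ_{q^i}\bigr)$ so that the recursion closes on itself --- is exactly what is needed; your recursion $D_i(c)=D_{i-1}\bigl(qc+(q-1)(k-i)\bigr)\cdot D_{i-1}\bigl(-(k-i)\bigr)^{q-1}$ and base case $D_1(c)=q(k-1+c)+1$ check out (e.g.\ for $q=2$, $k=3$ it gives $D_3(0)=441$, matching direct computation). As to what each approach buys: the paper's argument is elementary, using nothing beyond row reduction, but its reduction to the worst-case matrix rests on a monotonicity consideration that is left rather informal; your argument avoids all estimates, produces an exact product formula for $\det M_k$, and yields the strictly stronger conclusion $\det M_k\equiv 1 \pmod q$, which in particular makes the non-vanishing immediate for every $q\geq 2$.
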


\begin{proof}
For $k=1$, $M_k$ is just the unit matrix, which has non-zero determinant. So let $k>1$.
Of course, in order to show that $M_k$ has non-zero determinant it is sufficient to prove that, after row transformations that keep the order of rows stable, have the diagonal elements as Pivot elements, and lead to an upper triangular matrix, all the diagonal elements are positive. Since all the off-diagonal entries of $M_k$ are $\leq k-1$, it is suffient to show this for the matrix that has the entry $k$ on the diagonal and $k-1$ elsewhere (instead of showing it for $M_k$ itself). The $n$-th diagonal entry of this matrix after this elimination process is $k- r_n$, where $r_n$ is given by the following recursive sequence:
\begin{align*}
r_1 &= 0  \text{ and} \\
r_{n+1} &= r_n + ((k-1)-r_n)\cdot \frac{k-1}{k}\\
		&= r_n \cdot (1 - \frac{k-1}{k}) + \frac{(k-1)^2}{k}
\end{align*}
for $n > 1$. We show by induction on $n$ that $r_n < k$ for all $n$, which completes the proof. For $n = 1$ this is certainly true. So let $n\geq1$ and suppose that $r_n <k$. Then
\begin{align*}
r_{n+1} &= r_n \cdot (1 - \frac{k-1}{k}) + \frac{(k-1)^2}{k} \\
		&< k \cdot \frac{1}{k} + \frac{k^2-2k+1}{k}\\
		&= \frac{k^2 - k +1}{k} = k - 1 + \frac{1}{k},
\end{align*}
which is $<k$ because $k>1$.
\end{proof}

\section{Proof of the main result}

We are now ready to prove the asserted irreducibility properties of the Carlitz binomial polynomials.

\begin{proof}[Proof of Theorem~\ref{theorem:main}]
To see (1), first note that by definition of $\beta_k$, this polynomial can be decomposed as $\beta_k = \prod_{i = 1}^s (\beta_{q^i})^{\alpha_i}$. A fortiori, it is reducible in $\Int(\F_q[t])$ in case $k$ is not a power of $q$.

For the converse implication, let $k = q^s$ be a power of $q$ and $G,H \in \Int(\F_q[t])$ such that $\beta_k = G\cdot H$. We show that either $G$ or $H$ must be a unit of $\Int(\F_q[t])$, that is, a non-zero element of $\F_q$. Let $c = \deg(G)$, $d = \deg(H)$ and note that $c + d = k$. 

Since the $\beta_i$ form a regular basis of $\Int(\F_q[t])$ we know that the leading coefficients of $G$ and $H$ are of the form $\frac{g}{g_c}$ and $\frac{h}{g_d}$ for some $g,h \in \F_q[t]$. Hence $\frac{1}{g_k} = \frac{gh}{g_c g_d}$ and therefore $\frac{g_c g_d}{g_k} \in \F_q[t]$. Since in the decomposition $k = \alpha_0 + \alpha_1 q + \ldots + \alpha_s q^s$ the coefficients are $\alpha_0 = \ldots = \alpha_{s-1} = 0$ and $\alpha_s = 1$, we infer by Remark~\ref{remark:binom} that $\frac{g_c g_d}{g_k} \notin \F_q[t]$ unless $c = k$ or $d = k$.

Suppose, without loss of generality, that $d = k$. It follows that $c = 0$ and therefore $G \in \F_q[t]$. As a consequence $\frac{\beta_k}{G} = H \in \Int(\F_q[t])$. Using the unique representation $\frac{\beta_k}{G} = \sum_{i = 0}^k B_i \beta_i$ with $B_i \in \F_q[t]$, we get that $\frac{1}{G} = B_k \in \F_q[t]$ and hence $G \in \F_q[t]^\times$. It follows that $\beta_k$ is irreducible.\\

Now we head towards the proof of (2).
For $s = 0$, $\beta_{q^s} = \beta_1 = X$, and so the statement is trivial. Suppose now that $s > 0$. We use the matrix approach developed by Rissner and the second author~\cite{binomial} in order to show that the binomial polynomials $\binom{x}{n}$ are absolutely irreducible in $\Int(\Z)$. We recall the idea of this approach and, in the course of that, directly apply it to our situation:

By definition, 
$
\beta_{q^s}(X) = \frac{\psi_s(X)}{F_s},
$
and Carlitz proved that $\beta_{q^s}(t^s) = 1$~\cite[Paragraph after (1.4)]{Carlitz}. Using this, we see that 
\[
\beta_{q^s}(X) = \prod_{j = 1}^{q^s} \frac{X - f_j}{t^s - f_j},
\]
where $f_1,\ldots,f_{q^s}$ are all the polynomials in $\F_q[t]$ of degree less than $s$. We do the indexing of the $f_i$ in a way such that $f_1 = 0$ and polynomials of the same non-zero lowest degree term appear consecutively, beginning with degree $s-1$ and ending with degree $0$. To illustrate this, we write down such an indexing for a minimal example. If $q = 3$ and $s = 2$ one possible ordering is
\[
0, t, 2t, 1, 2, t+1, t+2, 2t+1, 2t+2.
\]
We start with $0$, which is followed by the two polynomials with lowest degree non-zero term of degree $1$, and we end with the six polynomials of lowest degree non-zero term of degree $0$.

According to the methods of the mentioned paper~\cite{binomial}, we now build an integer matrix $\mathbf{A}_{q^s}$ whose entries are the values (with respect to a certain valuation on the quotient field $\F_q(t)$ of $\F_q[t]$) of evaluations at $X$ of the individual factors $\frac{X-f_j}{t^s - f_j}$ of $\beta_{q^s}(X)$. While the authors of~\cite{binomial} had to use many distinct valuations on $\Q$, we only need the $t$-adic valuation $\mathsf{v} = \mathsf{v}_t$, thanks to the presence of the transcendental element $t \in \F_q(t)$.

We evaluate the factors $\frac{X-f_j}{t^s - f_j}$ at the polynomial $t^s + f_i$ for $i \in \{2,\ldots,q^s\}$, apply $\mathsf{v}$ and get
\[
\mathsf{v}\left(\frac{(t^s + f_i)-f_j}{t^s - f_j}\right) = \mathsf{v}((t^s + f_i) - f_j) - \mathsf{v}(t^s - f_j).
\]
The result is an integer matrix $\mathbf{A}_{q^s}$ of size $(q^s-1) \times q^s$ whose entry in row $i \in \{2,\ldots,q^s\}$ and column $j \in \{1,\ldots,q^s\}$ is $\mathsf{v}((t^s + f_i) - f_j) - \mathsf{v}(t^s - f_j)$. Moreover, all the rows of $\mathbf{A}_{q^s}$ sum up to $0$ which, finally, makes the matrix approach to absolute irreducibility~\cite{binomial} applicable.

So, following the ideas of~\cite[Proposition 3.15]{binomial}, we only need to show that the rank of $\mathbf{A}_{q^s}$ is $q^s - 1$. In order to do this we prove that the square submatrix $\mathbf{B}_{q^s}$ consisting of columns with indices $j \in \{2,\ldots,q^s\}$ has full rank, that is, it is a regular matrix over $\Q$.

 The way we ordered the $f_i$ makes $\mathbf{B}_{q^s}$ a lower block diagonal matrix. Indeed, if $i$ and $j$ are chosen such that the lowest degree non-zero term of $f_i$ has strictly larger degree then the one of $f_j$ then $\mathsf{v}((t^s+f_i)-f_j) = \mathsf{v}(t^s-f_j)$ and hence the entry of $\mathbf{B}_{q^s}$ at position $(i,j)$ is zero by definition. 

The diagonal consists of $s$ square blocks. The $k$-th block has dimension $(q-1)q^{k-1}$ and contains only positive entries whose values are $0,\ldots,k$. Each row and each column of the $k$-th block contains the entry $k$ exactly once and the entry $k-r$, for $r \in \{1,\ldots,k\}$, exactly $(q-1)q^{r-1}$ times. Moreover, the sets of column indices of the entries $\geq a$ (for $a \in \{0,\ldots,k\}$) of two arbitrary rows of the $k$-th block are either equal or disjoint, and they have the same cardinality. Identifying those rows where these sets are equal, these sets form a partition of the set of column indices of the $k$-th block. The dual statement is true for the row indices of the $k$-th block.

Hence, after row and column permutation inside the $k$-th block (which does not change the lower block diagonal form of the matrix), we arrive at the following symmetric structure for the $k$-th block of the matrix: The matrix $M_1^{(k)}$ of the first $q$ rows and columns has the entry $k$ on its diagonal and $k-1$ elsewhere. The matrix $M_2^{(k)}$ of the first $q^2$ rows and columns has $q$ times the matrix $M_1^{(k)}$ along its diagonal and the entry $k-2$ elsewhere. Accordingly, the matrix $M_i^{(k)}$, for $i \in \{2,\ldots,k\}$, of the first $q^i$ rows and columns of the $k$-th block has $q$ times the matrix $M_{i-1}^{(k)}$ along its diagonal and the entry $k-i$ elsewhere. Note that $M_k^{(k)}$ is exactly the matrix $M_k$ of Lemma~\ref{lemma:matrix} and that it agrees with the $k$-th block. Lemma~\ref{lemma:matrix} says that $M_k$ is regular which finishes the proof of (2).
\end{proof}

%

Independently of the irreducibility of an element of $\Int(\F_q[t])$, one could ask whether the uniqueness property of Theorem~\ref{theorem:main}(2) holds. This leads us to the following open question.

\begin{question}\label{question:general}
Does every power $\beta_{k}^m$ of the Carlitz binomial polynomial $\beta_{k}$ factor uniquely as a product of irreducible elements of $\Int(\F_q[t])$, in particular, when $k$ is not a power of $q$?
\end{question}

\bibliographystyle{amsplainurl}
\bibliography{bibliography}
 
\bigskip

\noindent
\textsc{Robert Tichy, Department of Analysis and Number Theory, Technische Universität Graz,  Steyrergasse 30/II, 8010 Graz, Austria} \\
\textit{E-mail address}: \texttt{tichy@tugraz.at} \\

\noindent
\textsc{Daniel Windisch, Department of Analysis and Number Theory, Technische Universität Graz, Kopernikusgasse 24/II, 8010 Graz, Austria} \\
\textit{E-mail address}: \texttt{dwindisch@math.tugraz.at}

%

\end{document}